\documentclass[reqno]{amsart}

\usepackage{a4wide}
\usepackage{color}
\usepackage{mathrsfs}
\usepackage{mathtools}
\usepackage{amsmath}
\usepackage{amssymb}
\numberwithin{equation}{section}
\usepackage[colorlinks,citecolor=green,linkcolor=red]{hyperref}

\usepackage[latin1]{inputenc}

\newcommand{\N}{\mathbb{N}}
\newcommand{\R}{\mathbb{R}}

\newcommand{\sfd}{{\sf d}}
\renewcommand{\d}{{\mathrm d}}
\newcommand{\e}{{\rm e}}
\newcommand{\X}{{\rm X}}
\newcommand{\LIP}{{\rm LIP}}
\newcommand{\lip}{{\rm lip}}
\newcommand{\nablaAM}{\nabla_{\!\scriptscriptstyle\rm AM}}
\newcommand{\Gr}{{\rm Gr}}
\newcommand{\Eucl}{{\rm Eucl}}
\newcommand{\Comp}{{\rm Comp}}
\newcommand{\Ch}{{\rm Ch}}
\newcommand{\ppi}{{\mbox{\boldmath\(\pi\)}}}
\newcommand{\sfE}{{\sf E}}
\newcommand{\EAM}{\sfE_{\scriptscriptstyle{\rm AM}}}
\newcommand{\limi}{\varliminf}
\newcommand{\lims}{\varlimsup}

\newcommand{\fr}{\penalty-20\null\hfill\(\blacksquare\)}

\newtheorem{theorem}{Theorem}[section]

\newtheorem{lemma}[theorem]{Lemma}
\newtheorem{proposition}[theorem]{Proposition}
\newtheorem{definition}[theorem]{Definition}
\newtheorem{example}[theorem]{Example}
\newtheorem{remark}[theorem]{Remark}

\linespread{1.15}
\setcounter{tocdepth}{2}

\title{A short proof of the infinitesimal Hilbertianity of the weighted Euclidean space}

\author{Simone Di Marino}
\address{Dipartimento di Matematica (DIMA), Via Dodecaneso 35, 16146 Genova, Universit\`a di Genova}
\email{simone.dimarino@unige.it}

\author{Danka Lu\v{c}i\'{c}}
\address{Department of Mathematics and Statistics,
P.O.\ Box 35 (MaD), FI-40014 University of Jyvaskyla}
\email{danka.d.lucic@jyu.fi}

\author{Enrico Pasqualetto}
\address{Department of Mathematics and Statistics,
P.O.\ Box 35 (MaD), FI-40014 University of Jyvaskyla}
\email{enrico.e.pasqualetto@jyu.fi}

\begin{document}
\date{\today} 
\keywords{Sobolev space, infinitesimal Hilbertianity, weighted Euclidean space, decomposability bundle, closability of the Sobolev norm}
\subjclass[2010]{53C23, 46E35, 26B05}
\begin{abstract}
We provide a quick proof of the following known result: the Sobolev space
associated with the Euclidean space, endowed with the Euclidean distance and
an arbitrary Radon measure, is Hilbert. Our new approach relies upon the
properties of the Alberti--Marchese decomposability bundle. As a consequence
of our arguments, we also prove that if the Sobolev norm is closable on
compactly-supported smooth functions, then the reference measure is
absolutely continuous with respect to the Lebesgue measure.
\end{abstract}
\maketitle
\section*{Introduction}
In recent years, the theory of weakly differentiable functions over an abstract
metric measure space \((\X,\sfd,\mu)\) has been extensively studied.
Starting from the seminal paper \cite{Cheeger00}, several (essentially equivalent) versions of Sobolev space \(W^{1,2}(\X,\sfd,\mu)\) have been proposed in \cite{Shanmugalingam00,AmbrosioGigliSavare11,DiM14a}.
The definition we shall adopt in this paper is the one via test plans and weak upper
gradients, which has been introduced by L.\ Ambrosio, N.\ Gigli and G.\ Savar\'{e}
in \cite{AmbrosioGigliSavare11}. In general, \(W^{1,2}(\X,\sfd,\mu)\) is a Banach space,
but it might be non-Hilbert: for instance, consider the Euclidean space
endowed with the \(\ell^\infty\)-norm and the Lebesgue measure. Those metric measure
spaces whose associated Sobolev space is Hilbert -- which are said to be
\emph{infinitesimally Hilbertian}, cf.\ \cite{Gigli12} -- play a very
important role. We refer to the introduction of \cite{LP20} for an account
of the main advantages and features of this class of spaces.
\medskip

The aim of this manuscript is to provide a quick proof of the following result
(cf.\ Theorem \ref{thm:Eucl_inf_Hilb}):
\begin{equation}\tag{\(\star\)}\label{eq:statement_main_result}
(\R^d,\sfd_\Eucl,\mu)\;\text{ is infinitesimally Hilbertian for any Radon measure }
\mu\geq 0\text{ on }\R^d,
\end{equation}
where \(\sfd_\Eucl(x,y)\coloneqq|x-y|\) stands for the Euclidean distance on \(\R^d\).
This fact has been originally proven in \cite{GP16-2}, but it can also be alternatively
considered as a special case of the main result in \cite{DMGSP18}. The approach we propose
here is more direct and is based upon the differentiability theorem \cite{AM16} for Lipschitz
functions in \(\R^d\) with respect to a given Radon measure, as we are going to describe.
\medskip

Let \(\mu\geq 0\) be any Radon measure on \(\R^d\). G.\ Alberti and
A.\ Marchese proved in \cite{AM16} that it is possible to select the maximal
measurable sub-bundle \(V(\mu,\cdot)\) of \(T\R^d\) -- called the \emph{decomposability bundle}
of \(\mu\) -- along which all Lipschitz functions are \(\mu\)-a.e.\ differentiable.
This way, any given Lipschitz function \(f\colon\R^d\to\R\) is naturally associated with
a gradient \(\nablaAM f\), which is an \(L^\infty\)-section of \(V(\mu,\cdot)\).
Being \(\nablaAM\) a linear operator, its induced Dirichlet energy functional
\(\EAM\) on \(L^2(\mu)\) is a quadratic form. Hence, the proof of
\eqref{eq:statement_main_result} presented here follows along these lines:
\begin{itemize}
\item[\(\rm a)\)] The maximality of \(V(\mu,\cdot)\) ensures that the curves selected
by a test plan \(\ppi\) on \((\R^d,\sfd_\Eucl,\mu)\) are `tangent' to \(V(\mu,\cdot)\),
namely, \(\dot\gamma_t\in V(\mu,\gamma_t)\) for
\((\ppi\otimes\mathcal L^1)\)-a.e.\ \((\gamma,t)\). See Lemma \ref{lem:pi_tangent_to_V}.
\item[\(\rm b)\)] Given any Lipschitz function \(f\colon\R^d\to\R\), we can deduce from
item a) that the modulus of the gradient \(\nablaAM f\) is a weak upper gradient of \(f\);
cf.\ Proposition \ref{prop:nablaAM_wug}.
\item[\(\rm c)\)] Since Lipschitz functions with compact support are dense in energy
in \(W^{1,2}(\R^d,\sfd_\Eucl,\mu)\) -- cf.\ Theorem \ref{thm:density_in_energy} below --
we conclude from b) that the Cheeger energy \(\sfE_\Ch\) is the lower semicontinuous
envelope of \(\EAM\). This grants that \(\sfE_\Ch\) is a quadratic form, thus accordingly
the space \((\R^d,\sfd_\Eucl,\mu)\) is infinitesimally Hilbertian.
See Theorem \ref{thm:Eucl_inf_Hilb} for the details.
\end{itemize}
Finally, by combining our techniques with a structural result
for Radon measures in the Euclidean space by De Philippis--Rindler
\cite{DPR}, we eventually prove (in Theorem \ref{thm:no_closable})
the following claim:
\[
\text{The Sobolev norm }\|\cdot\|_{W^{1,2}(\R^d,\sfd_{\rm Eucl},\mu)}
\text{ is closable on }C^\infty_c\text{-functions}\quad\Longrightarrow\quad\mu\ll\mathcal L^d.
\]
Cf.\ Definition \ref{def:closability} for the notion of closability we
are referring to. This result solves a conjecture that has been posed by
M.\ Fukushima (according to V.I.\ Bogachev \cite[Section 2.6]{Bogachev10}).
\medskip

{\bf Acknowledgements.} The second and third named authors acknowledge the support by
the Academy of Finland, projects 274372, 307333, 312488, and 314789.
\section{Preliminaries}
\subsection{Sobolev calculus on metric measure spaces}
By \emph{metric measure space} \((\X,\sfd,\mu)\) we mean a complete, separable
metric space \((\X,\sfd)\) together with a non-negative Radon measure \(\mu\neq 0\).
\medskip

We denote by \(\LIP(\X)\) the space of all real-valued Lipschitz functions on \(\X\),
whereas \(\LIP_c(\X)\) stands for the family of all elements of \(\LIP(\X)\) having
compact support. Given any \(f\in\LIP(\X)\), we shall denote by
\(\lip(f)\colon\X\to[0,+\infty)\) its \emph{local Lipschitz constant},
which is defined as
\[\lip(f)(x)\coloneqq\left\{\begin{array}{ll}
\lims_{y\to x}\big|f(x)-f(y)\big|/\sfd(x,y)\\
0
\end{array}\quad\begin{array}{ll}
\text{ if }x\in\X\text{ is an accumulation point,}\\
\text{ otherwise.}
\end{array}\right.\]
The metric space \((\X,\sfd)\) is said to be \emph{proper} provided its bounded, closed
subsets are compact.
\medskip

To introduce the notion of Sobolev space \(W^{1,2}(\X,\sfd,\mu)\) that has been proposed
in \cite{AmbrosioGigliSavare11}, we first need to recall some terminology.
The space \(C\big([0,1],\X\big)\) of all continuous curves in \(\X\) is a complete,
separable metric space if endowed with the sup-distance
\(\sfd_\infty(\gamma,\sigma)\coloneqq\max\big\{\sfd(\gamma_t,\sigma_t)\;\big|\;t\in[0,1]\big\}\).
We say that \(\gamma\in C\big([0,1],\X\big)\) is \emph{absolutely continuous}
provided there exists a function \(g\in L^1(0,1)\) such that
\(\sfd(\gamma_s,\gamma_t)\leq\int_s^t g(r)\,\d r\) holds for all \(s,t\in[0,1]\)
with \(s<t\). The \emph{metric speed} \(|\dot\gamma|\) of \(\gamma\), defined as
\(|\dot\gamma_t|\coloneqq\lim_{h\to 0}\sfd(\gamma_{t+h},\gamma_t)/|h|\)
for \(\mathcal L^1\)-a.e.\ \(t\in[0,1]\), is the minimal integrable function
(in the \(\mathcal L^1\)-a.e.\ sense)
that can be chosen as \(g\) in the previous inequality;
cf.\ \cite[Theorem 1.1.2]{AmbrosioGigliSavare08}.
A \emph{test plan} over \((\X,\sfd,\mu)\) is a Borel probability measure \(\ppi\)
on \(C\big([0,1],\X\big)\), concentrated on absolutely continuous curves,
such that the following properties are satisfied:
\begin{itemize}
\item \textsc{Bounded compression.} There exists \(\Comp(\ppi)>0\)
such that \((\e_t)_*\ppi\leq\Comp(\ppi)\,\mu\) holds for all \(t\in[0,1]\),
where \(\e_t\colon C\big([0,1],\X\big)\to\X\) stands for the evaluation map
\(\gamma\mapsto\e_t(\gamma)\coloneqq\gamma_t\).
\item \textsc{Finite kinetic energy.} It holds that
\(\int\!\!\int_0^1|\dot\gamma_t|^2\,\d t\,\d\ppi(\gamma)<+\infty\).
\end{itemize}
Let \(f\colon\X\to\R\) be a given Borel function. We say that \(G\in L^2(\mu)\)
is a \emph{weak upper gradient} of \(f\) provided for any test plan \(\ppi\)
on \((\X,\sfd,\mu)\) it holds that \(f\circ\gamma\in W^{1,1}(0,1)\) for
\(\ppi\)-a.e.\ \(\gamma\) and that
\[\big|(f\circ\gamma)'_t\big|\leq G(\gamma_t)\,|\dot\gamma_t|
\quad\text{ for }(\ppi\otimes\mathcal L^1)\text{-a.e.\ }(\gamma,t).\]
The minimal such function \(G\) (in the \(\mu\)-a.e.\ sense) is called the
\emph{minimal weak upper gradient} of \(f\) and is denoted by \(|Df|\in L^2(\mu)\).
\begin{definition}[Sobolev space \cite{AmbrosioGigliSavare11}]
The \emph{Sobolev space} \(W^{1,2}(\X,\sfd,\mu)\) is defined as the family of all
those functions \(f\in L^2(\mu)\) that admit a weak upper gradient \(G\in L^2(\mu)\).
We endow the vector space \(W^{1,2}(\X,\sfd,\mu)\) with the Sobolev norm
\(\|f\|_{W^{1,2}(\X,\sfd,\mu)}^2\coloneqq\|f\|_{L^2(\mu)}^2+\big\||Df|\big\|_{L^2(\mu)}^2\).
\end{definition}
The Sobolev space \(\big(W^{1,2}(\X,\sfd,\mu),\|\cdot\|_{W^{1,2}(\X,\sfd,\mu)}\big)\)
is a Banach space, but in general it is not a Hilbert space. This fact motivates the
following definition, which has been proposed by N.\ Gigli:
\begin{definition}[Infinitesimal Hilbertianity \cite{Gigli12}]
We say that a metric measure space \((\X,\sfd,\mu)\) is \emph{infinitesimally Hilbertian}
provided its associated Sobolev space \(W^{1,2}(\X,\sfd,\mu)\) is a Hilbert space.
\end{definition}
Let us define the \emph{Cheeger energy} functional
\(\sfE_\Ch\colon L^2(\mu)\to[0,+\infty]\) as
\begin{equation}\label{eq:def_E_Ch}
\sfE_{\rm Ch}(f):=\left\{\begin{array}{ll}
\frac{1}{2}\int|Df|^2\,\d\mu\\
+\infty
\end{array}\quad\begin{array}{ll}
\text{ if }f\in W^{1,2}(\X,\sfd,\mu),\\
\text{ otherwise.}
\end{array}\right.
\end{equation}
It holds that the metric measure space \((\X,\sfd,\mu)\) is infinitesimally
Hilbertian if and only if \(\sfE_\Ch\) satisfies the \emph{parallelogram rule}
when restricted to \(W^{1,2}(\X,\sfd,\mu)\), \emph{i.e.},
\begin{equation}\label{eq:parallelogram_id}
\sfE_\Ch(f+g)+\sfE_\Ch(f-g)=2\,\sfE_\Ch(f)+2\,\sfE_\Ch(g)
\quad\text{ for every }f,g\in W^{1,2}(\X,\sfd,\mu).
\end{equation}
Furthermore, we define the functional \(\sfE_\lip\colon L^2(\mu)\to[0,+\infty]\) as
\begin{equation}\label{eq:def_E_lip}
\sfE_\lip(f)\coloneqq\left\{\begin{array}{ll}
\frac{1}{2}\int\lip^2(f)\,\d\mu\\
+\infty
\end{array}\quad\begin{array}{ll}
\text{ if }f\in\LIP_c(\X),\\
\text{ otherwise.}
\end{array}\right.
\end{equation}
Given any \(f\in\LIP_c(\X)\), it holds that \(f\in W^{1,2}(\X,\sfd,\mu)\)
and \(|Df|\leq\lip(f)\) in the \(\mu\)-a.e.\ sense. This ensures that the
inequality \(\sfE_\Ch\leq\sfE_\lip\) is satisfied. Actually, \(\sfE_\Ch\)
is the \(L^2(\mu)\)-relaxation of \(\sfE_\lip\):
\begin{theorem}[Density in energy \cite{AmbrosioGigliSavare11-3}]
\label{thm:density_in_energy}
Let \((\X,\sfd,\mu)\) be a metric measure space, with \((\X,\sfd)\) proper.
Then \(\sfE_\Ch\) is the \emph{\(L^2(\mu)\)-lower semicontinuous envelope}
of \(\sfE_\lip\), \emph{i.e.}, it holds that
\[\sfE_\Ch(f)=\inf\limi_{n\to\infty}\sfE_\lip(f_n)\quad\text{ for every }f\in L^2(\mu),\]
where the infimum is taken among all sequences
\((f_n)_n\subseteq L^2(\mu)\) such that \(f_n\to f\) in \(L^2(\mu)\).
\end{theorem}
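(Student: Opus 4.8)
The plan is to prove the two inequalities $\sfE_\Ch\leq\bar\sfE_\lip$ and $\bar\sfE_\lip\leq\sfE_\Ch$ separately, where $\bar\sfE_\lip(f)\coloneqq\inf\big\{\limi_{n}\sfE_\lip(f_n)\;\big|\;f_n\to f\text{ in }L^2(\mu)\big\}$ denotes the $L^2(\mu)$-lower semicontinuous envelope of $\sfE_\lip$. By general relaxation theory, $\bar\sfE_\lip$ is the greatest $L^2(\mu)$-lower semicontinuous functional lying below $\sfE_\lip$; hence the first inequality reduces to checking that $\sfE_\Ch$ is itself $L^2(\mu)$-lower semicontinuous and satisfies $\sfE_\Ch\leq\sfE_\lip$. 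The latter is already recorded in the excerpt, as a consequence of the pointwise bound $|Df|\leq\lip(f)$ valid for $f\in\LIP_c(\X)$.

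First I would establish the lower semicontinuity of $\sfE_\Ch$, which amounts to the following closure property of minimal weak upper gradients: if $f_n\to f$ in $L^2(\mu)$ with $\sup_n\big\||Df_n|\big\|_{L^2(\mu)}<+\infty$, then $f\in W^{1,2}(\X,\sfd,\mu)$ and $\big\||Df|\big\|_{L^2(\mu)}\leq\limi_n\big\||Df_n|\big\|_{L^2(\mu)}$. To prove this, extract a subsequence along which $|Df_n|$ converges weakly in $L^2(\mu)$ to some $G$; by Mazur's lemma together with the convexity of $f\mapsto|Df|$, one may pass to convex combinations $g_n$ of the $f_n$ with $g_n\to f$ in $L^2(\mu)$ admitting weak upper gradients $G_n\to G$ strongly in $L^2(\mu)$, so that it suffices to verify that $G$ is a weak upper gradient of $f$. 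Fixing a test plan $\ppi$ and using bounded compression $(\e_t)_*\ppi\leq\Comp(\ppi)\,\mu$, the $L^2(\mu)$-convergence transfers along $\ppi$-a.e.\ curve, while the defining inequalities $\big|(g_n\circ\gamma)'_t\big|\leq G_n(\gamma_t)\,|\dot\gamma_t|$ pass to the limit after integration in $(\gamma,t)$ thanks to the strong convergence $G_n\to G$. This yields $|Df|\leq G$ and hence the asserted lower semicontinuity, whence $\sfE_\Ch\leq\bar\sfE_\lip$.

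The reverse inequality $\bar\sfE_\lip\leq\sfE_\Ch$ is the genuine content: given $f\in W^{1,2}(\X,\sfd,\mu)$ one must produce a recovery sequence $(f_n)_n\subseteq\LIP_c(\X)$ with $f_n\to f$ in $L^2(\mu)$ and $\lims_n\frac12\int\lip^2(f_n)\,\d\mu\leq\frac12\int|Df|^2\,\d\mu$. I would proceed in two stages. After a preliminary truncation-and-cutoff reduction --- multiplying by compactly supported Lipschitz cutoffs, whose supports are compact thanks to properness, and checking that these operations do not increase the energy in the limit --- one reduces to bounded $f$ with bounded support. The core regularization step is then to run the gradient flow $(P_t)_{t>0}$ of the convex functional $\sfE_\Ch$ in the Hilbert space $L^2(\mu)$: one has $P_t f\to f$ in $L^2(\mu)$ as $t\to 0^+$ and $\sfE_\Ch(P_t f)\leq\sfE_\Ch(f)$, while the flow regularizes $f$ to the point that the asymptotic Lipschitz constant of $P_t f$ is controlled by $|D(P_t f)|$. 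This last control is obtained through the Hopf–Lax semigroup --- which solves the Hamilton–Jacobi inequality $\partial_t Q_t g+\frac12\lip^2(Q_t g)\leq 0$ at the level of asymptotic Lipschitz constants --- and the Kuwada-type duality relating the dissipation of $\sfE_\Ch$ along $(P_t)$ to the metric speed of the associated Wasserstein geodesics; combining it with a diagonal argument in $t$ and $n$ furnishes the desired Lipschitz approximants.

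I expect the main obstacle to be precisely this second inequality. The difficulty is that there is no soft route from a weak upper gradient --- defined only through integration along test plans --- back to the pointwise local Lipschitz constant of an explicit approximation, and the only known mechanism is the self-improving regularity of the heat flow together with the sharp Hopf–Lax/Kuwada duality between the two notions of gradient. The properness hypothesis enters in a comparatively minor but essential way, namely to guarantee that the cutoffs realizing the reduction to compact support belong to $\LIP_c(\X)$, so that the relaxation may be computed within $\LIP_c(\X)$ rather than merely within $\LIP(\X)$.
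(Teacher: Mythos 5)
The paper does not actually prove Theorem \ref{thm:density_in_energy}: it is imported as a black box from the cited work of Ambrosio--Gigli--Savar\'e, so there is no in-paper argument to compare yours against; what follows assesses your sketch on its own merits. The half $\sfE_\Ch\leq\inf\limi_n\sfE_\lip(f_n)$ is handled correctly: $\sfE_\Ch\leq\sfE_\lip$ is already recorded in the paper, and your proof of $L^2(\mu)$-lower semicontinuity of $\sfE_\Ch$ (weak compactness of the gradients, Mazur's lemma plus subadditivity of the minimal weak upper gradient, then stability of weak upper gradients under strong $L^2(\mu)$ convergence) is the standard and correct argument, modulo the usual care in passing to the integrated form of the upper gradient inequality along $\ppi$-a.e.\ curve.

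The reverse inequality is where the content lies, and there your outline has two genuine gaps. First, you propose to run the $L^2(\mu)$-gradient flow of $\sfE_\Ch$ itself and to claim that the asymptotic Lipschitz constant of $P_tf$ is controlled by $|D(P_tf)|$; but $P_tf$ need not be Lipschitz, and the statement you actually need is that $P_tf$ belongs to the domain of the relaxed functional $\overline\sfE_\lip$ with $\overline\sfE_\lip(P_tf)\leq\sfE_\Ch(P_tf)$ --- which is precisely the inequality being proved, so as written the plan is circular. The argument in the literature runs the gradient flow of the \emph{relaxed} functional $\overline\sfE_\lip$ (whose domain contains $\LIP_c(\X)$ and is therefore $L^2(\mu)$-dense) and extracts the comparison from an energy-dissipation identity. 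Second, the step in which the weak upper gradient actually enters is absent: one must upgrade the curve $t\mapsto(P_tf)\,\mu$, which Kuwada's lemma shows to be absolutely continuous for the quadratic Wasserstein distance with metric speed controlled by the dissipation of $\overline\sfE_\lip$, into a genuine \emph{test plan} by means of the superposition principle, and only then can the defining inequality for weak upper gradients be integrated against that plan to close the estimate between the relaxed and the weak energies. Invoking the Hopf--Lax semigroup and ``Kuwada-type duality'' by name does not substitute for this construction, which is the technical heart of the cited proof. Your remark that properness serves only to place the cutoff functions in $\LIP_c(\X)$ is correct.
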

\subsection{Decomposability bundle}\label{ss:decomposability_bundle}
Let us denote by \(\Gr(\R^d)\) the set of all linear subspaces of \(\R^d\).
Given any \(V,W\in\Gr(\R^d)\), we define the distance \(\sfd_\Gr(V,W)\)
as the Hausdorff distance in \(\R^d\) between the closed unit ball of \(V\)
and that of \(W\). Hence, \(\big(\Gr(\R^d),\sfd_\Gr\big)\) is a compact metric space.
\begin{theorem}[Decomposability bundle \cite{AM16}]\label{thm:Alberti-Marchese}
Let \(\mu\geq 0\) be a given Radon measure on \(\R^d\). Then there exists
a \(\mu\)-a.e.\ unique Borel mapping \(V(\mu,\cdot)\colon\R^d\to\Gr(\R^d)\),
called the \emph{decomposability bundle} of \(\mu\), such that the following
properties hold:
\begin{itemize}
\item[\(\rm i)\)] Any function \(f\in\LIP(\R^d)\) is differentiable at
\(\mu\)-a.e.\ \(x\in\R^d\) with respect to \(V(\mu,x)\), \emph{i.e.}, there exists a Borel
map \(\nablaAM f\colon\R^d\to\R^d\) such that \(\nablaAM f(x)\in V(\mu,x)\)
for all \(x\in\R^d\) and
\begin{equation}\label{eq:formula_nablaAM}
\lim_{V(\mu,x)\ni v\to 0}\frac{f(x+v)-f(x)-\nablaAM f(x)\cdot v}{|v|}=0
\quad\text{ for }\mu\text{-a.e.\ }x\in\R^d.
\end{equation}
\item[\(\rm ii)\)] There exists a function \(f_0\in\LIP(\R^d)\) such that
for \(\mu\)-a.e.\ point \(x\in\R^d\) it holds that \(f_0\) is not differentiable
at \(x\) with respect to any direction \(v\in\R^d\setminus V(\mu,x)\).
\end{itemize}
\end{theorem}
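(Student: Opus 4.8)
The crux is to give the right \emph{definition} of $V(\mu,\cdot)$, after which both assertions follow by dual arguments. The plan is to build $V(\mu,x)$ out of the one-dimensional \emph{decompositions} of $\mu$: one considers all the ways of writing (the restriction of $\mu$ to a Borel set) as a superposition $\int\mu_\gamma\,\d\pi(\gamma)$, where $\pi$ is a measure on a family of $C^1$ curves and each $\mu_\gamma$ is a measure carried by the curve $\gamma$ and absolutely continuous with respect to its arc-length; to such a decomposition one attaches the tangent field $x\mapsto\mathrm{span}\{\dot\gamma:\gamma\ni x\}$, and one lets $V(\mu,x)$ be the (essential) \emph{maximum} of these tangent fields over all decompositions. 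First I would check that this maximum is attained: since $\dim V(\mu,x)\leq d$, a maximizing sequence can be merged into a single decomposition by an exhaustion procedure that keeps adjoining new curve families as long as they contribute genuinely new tangent directions on a set of positive $\mu$-measure, the process stopping after finitely many enlargements at $\mu$-a.e.\ point. Borel measurability of the resulting map into the compact space $\big(\Gr(\R^d),\sfd_\Gr\big)$ is then secured by a measurable-selection argument.

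Granted this construction, property i) is the comparatively tame implication ``decomposability $\Rightarrow$ differentiability''. Fix $f\in\LIP(\R^d)$ and a curve $\gamma$ appearing in a maximal decomposition: the composition $f\circ\gamma$ is Lipschitz on an interval, hence differentiable at $\mathcal L^1$-a.e.\ parameter, so $f$ admits the directional derivative $\partial_{\dot\gamma}f$ at $\mu_\gamma$-a.e.\ point of $\gamma$. Integrating this against $\pi$ and recalling that $\mu$ is the superposition of the $\mu_\gamma$, I would deduce that $f$ possesses directional derivatives along $\mu$-a.e.\ line of $V(\mu,x)$; running the argument over a countable family of decompositions whose tangents span $V(\mu,\cdot)$ produces directional derivatives along a spanning set of directions. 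The delicate step is to \emph{upgrade} these separate directional derivatives into a single affine approximation, i.e.\ into the limit \eqref{eq:formula_nablaAM}: here one shows that $v\mapsto\partial_v f(x)$ is linear on $V(\mu,x)$ for $\mu$-a.e.\ $x$ and controls the remainder uniformly in $v\in V(\mu,x)$ by a blow-up/tangent-measure argument, after which $\nablaAM f(x)\in V(\mu,x)$ is defined as the unique vector reproducing these directional derivatives, its Borel measurability again following from selection.

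Property ii) is the genuine obstacle, since it asserts the \emph{sharpness} of $V(\mu,\cdot)$: one must produce a \emph{single} Lipschitz $f_0$ that is nondifferentiable along \emph{every} direction outside the bundle, at $\mu$-a.e.\ point at once. The strategy is dual to the previous one. Where $v\notin V(\mu,x)$ on a set of positive measure, maximality forbids any one-dimensional decomposition of $\mu$ tangent to $v$ there; this non-decomposability is exactly a quantitative ``pure unrectifiability in the direction $v$'', and it is what leaves room to sustain oscillations that no curve-average can cancel. Concretely, I would fix a countable dense set of transverse directions and, for each, build a Lipschitz function whose difference quotients along that direction oscillate with a definite amplitude $\mu$-a.e.\ on the relevant set (a Weierstrass-type superposition of finely oscillating distance-like functions), then assemble them into $f_0$ via a rapidly summable series chosen to preserve both Lipschitz continuity and the simultaneous nondifferentiability. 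The hard part is precisely this last construction --- turning the measure-theoretic non-decomposability into a uniform lower bound on the oscillation of the difference quotients, and carrying it through $\mu$-measurably over the whole of $\R^d$. Finally, uniqueness is immediate from i) and ii): if $V_1,V_2$ both satisfy the two properties, then the witness $f_0$ for $V_2$ is differentiable along $V_1$ by i), forcing $V_1\subseteq V_2$ $\mu$-a.e.\ by ii) for $V_2$, and the reverse inclusion follows by symmetry.
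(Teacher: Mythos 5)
This theorem is not proved in the paper: it is quoted from Alberti--Marchese \cite{AM16}, and the only argument the paper supplies on its own is the subsequent remark reducing an arbitrary Radon measure to a finite one by replacing \(\mu\) with the equivalent probability measure \(\tilde\mu\). So your sketch cannot be measured against an in-paper proof; measured against \cite{AM16} itself, your outline is broadly faithful to the actual strategy there: \(V(\mu,\cdot)\) is indeed defined as an essential supremum of tangent fields of one-dimensional decompositions of \(\mu\), part i) is the ``decomposability implies differentiability'' direction obtained by differentiating \(f\) along the curves of a maximal decomposition, part ii) is the hard converse, and your uniqueness argument combining i) for one bundle with ii) for the other is exactly the standard one and is correct.

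As a proof, however, the proposal has genuine gaps, both of which you flag but neither of which you close. First, in i), passing from \(\mathcal L^1\)-a.e.\ differentiability of \(f\circ\gamma\) along the curves of countably many decompositions to the single limit \eqref{eq:formula_nablaAM} --- linearity of \(v\mapsto\partial_v f(x)\) on \(V(\mu,x)\) together with a remainder that is \(o(|v|)\) \emph{uniformly} over \(v\in V(\mu,x)\) --- is a substantial step, and ``a blow-up/tangent-measure argument'' is a placeholder, not an argument. Second, and more seriously, the construction of \(f_0\) in ii) is the deep content of \cite{AM16}, and two concrete obstructions stand in the way of the plan as written: (a) non-differentiability is not preserved under the ``rapidly summable series'' you propose --- a sum of Lipschitz functions, each non-differentiable on a given set, can perfectly well be differentiable there, so the assembly step needs a mechanism forcing the oscillations of the different summands to occur at interlaced, non-cancelling scales; (b) treating only a countable dense family of transverse directions does not suffice, because ii) demands non-differentiability in \emph{every} \(v\in\R^d\setminus V(\mu,x)\) with a single exceptional \(\mu\)-null set, and directional non-differentiability does not pass to limits of directions, while the set of ``bad'' directions varies with \(x\). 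Finally, you do not address the one point the paper does argue, namely the reduction from the finite-measure setting of \cite{AM16} to a general Radon measure; this is minor but it is the only part of the statement for which the paper itself carries any burden of proof.
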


We refer to \(\nablaAM f\) as the \emph{Alberti--Marchese gradient} of \(f\).
It readily follows from \eqref{eq:formula_nablaAM} that \(\nablaAM f\) is
uniquely determined (up to \(\mu\)-a.e.\ equality) and that for every
\(f,g\in\LIP(\R^d)\) it holds that
\begin{equation}\label{eq:nablaAM_linear}
\nablaAM(f\pm g)(x)=\nablaAM f(x)\pm\nablaAM g(x)
\quad\text{ for }\mu\text{-a.e.\ }x\in\R^d.
\end{equation}
\begin{remark}{\rm
Theorem \ref{thm:Alberti-Marchese} was actually proven under the
additional assumption of \(\mu\) being a finite measure. However,
the statement depends only on the null sets of \(\mu\), not on the
measure \(\mu\) itself. Therefore, in order to obtain Theorem
\ref{thm:Alberti-Marchese} as a consequence of the original result in \cite{AM16}, it is sufficient to
replace \(\mu\) with the following Borel probability measure on \(\R^d\):
\[
\tilde\mu\coloneqq\sum_{j=1}^\infty\frac{\mu|_{B_j(\bar x)}}
{2^j\mu\big(B_j(\bar x)\big)},\quad\text{ for some }\bar x\in{\rm spt}(\mu).
\]
Observe, indeed, that the measure \(\tilde\mu\) satisfies
\(\mu\ll\tilde\mu\ll\mu\).
\fr}\end{remark}
\begin{remark}{\rm
Given any function \(f\in\LIP(\R^d)\), it holds that
\begin{equation}\label{eq:nablaAM_leq_lip}
\big|\nablaAM f(x)\big|\leq\lip(f)(x)\quad\text{ for }\mu\text{-a.e.\ }x\in\R^d.
\end{equation}
Indeed, fix any point \(x\in\R^d\) such that \(f\) is differentiable at \(x\) with
respect to \(V(\mu,x)\). Then for all \(v\in V(\mu,x)\setminus\{0\}\) it holds that
\(\nablaAM f(x)\cdot v=|v|\,\lim_{h\searrow 0}\big(f(x+hv)-f(x)\big)/|hv|
\leq|v|\,\lip(f)(x)\) by \eqref{eq:formula_nablaAM}, thus accordingly
\(\big|\nablaAM f(x)\big|=\sup\big\{\nablaAM f(x)\cdot v\;\big|\;
v\in V(\mu,x),\,|v|\leq 1\big\}\leq\lip(f)(x)\).
\fr}\end{remark}
\section{Universal infinitesimal Hilbertianity of the Euclidean space}
The objective of this section is to show that the Euclidean space is
\emph{universally infinitesimally Hilbertian}, meaning that it is
infinitesimally Hilbertian when equipped with any Radon measure;
cf.\ Theorem \ref{thm:Eucl_inf_Hilb} below. The strategy of the proof we are going to
present here is based upon the structure of the decomposability bundle
described in Subsection \ref{ss:decomposability_bundle}.
\medskip

First of all, we prove that any given test plan over the weighted Euclidean space
is `tangent', in a suitable sense, to the Alberti--Marchese decomposability bundle:
\begin{lemma}\label{lem:pi_tangent_to_V}
Let \(\mu\geq 0\) be a given Radon measure on \(\R^d\).
Let \(\ppi\) be a test plan on \((\R^d,\sfd_\Eucl,\mu)\).
Then for \(\ppi\)-a.e.\ \(\gamma\) it holds that
\[\dot\gamma_t\in V(\mu,\gamma_t)\quad
\text{ for }\mathcal L^1\text{-a.e.\ }t\in[0,1].\]
\end{lemma}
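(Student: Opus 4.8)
The plan is to exploit the maximality of the decomposability bundle encoded in item ii) of Theorem \ref{thm:Alberti-Marchese}, testing it against the velocities of the curves selected by \(\ppi\). Let \(f_0\in\LIP(\R^d)\) be the function provided by that item, and let \(N\subseteq\R^d\) denote the \(\mu\)-negligible set outside of which \(f_0\) admits no directional derivative along any vector not lying in the fibre of \(V(\mu,\cdot)\); equivalently, for every \(x\in\R^d\setminus N\) and every \(v\in\R^d\), the mere existence of the directional derivative \(\lim_{h\to0}\big(f_0(x+hv)-f_0(x)\big)/h\) already forces \(v\in V(\mu,x)\). This contrapositive reading of ii) is the property I intend to apply to \(v=\dot\gamma_t\).

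First I would record the two regularity facts holding for \(\ppi\)-a.e.\ \(\gamma\). Since \(\gamma\) is absolutely continuous and \(\R^d\)-valued, its vector derivative \(\dot\gamma_t\coloneqq\lim_{h\to0}(\gamma_{t+h}-\gamma_t)/h\) exists for \(\mathcal L^1\)-a.e.\ \(t\); and since \(f_0\) is Lipschitz, the composition \(f_0\circ\gamma\) is absolutely continuous, so that \((f_0\circ\gamma)'_t\) exists for \(\mathcal L^1\)-a.e.\ \(t\) as well. The bounded-compression property, combined with Fubini's theorem applied to the jointly continuous map \((\gamma,t)\mapsto\e_t(\gamma)\), guarantees that \(\gamma_t\notin N\) for \((\ppi\otimes\mathcal L^1)\)-a.e.\ \((\gamma,t)\), since \((\e_t)_*\ppi(N)\le\Comp(\ppi)\,\mu(N)=0\) for every fixed \(t\).

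The core of the argument is a chain-rule estimate identifying \((f_0\circ\gamma)'_t\) with a directional derivative of \(f_0\). Fix \((\gamma,t)\) at which both \(\dot\gamma_t\) and \((f_0\circ\gamma)'_t\) exist. Writing \(\gamma_{t+h}=\gamma_t+h\dot\gamma_t+o(h)\) and using that \(f_0\) is Lipschitz, one gets
\[
\left|\frac{f_0(\gamma_t+h\dot\gamma_t)-f_0(\gamma_t)}{h}-\frac{f_0(\gamma_{t+h})-f_0(\gamma_t)}{h}\right|
\le\mathrm{Lip}(f_0)\,\frac{|\gamma_t+h\dot\gamma_t-\gamma_{t+h}|}{|h|}\xrightarrow[h\to0]{}0,
\]
so that the directional derivative of \(f_0\) at \(\gamma_t\) along \(\dot\gamma_t\) exists and equals \((f_0\circ\gamma)'_t\). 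Combining this with the previous step, for \((\ppi\otimes\mathcal L^1)\)-a.e.\ \((\gamma,t)\) the point \(\gamma_t\) lies outside \(N\) while \(f_0\) possesses a directional derivative at \(\gamma_t\) along \(\dot\gamma_t\); the defining property of \(N\) then yields \(\dot\gamma_t\in V(\mu,\gamma_t)\). (When \(\dot\gamma_t=0\) this is automatic, as \(0\) belongs to every fibre.) A final application of Fubini's theorem recasts this \((\ppi\otimes\mathcal L^1)\)-almost-everywhere statement as the asserted conclusion.

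I expect the main obstacle to be precisely this chain-rule step, namely upgrading the \(\mathcal L^1\)-a.e.\ existence of \((f_0\circ\gamma)'_t\) — which is automatic from absolute continuity — to the existence of a genuine directional derivative of \(f_0\) at \(\gamma_t\) in the direction \(\dot\gamma_t\). This is where finite-dimensionality enters, through the first-order expansion \(\gamma_{t+h}=\gamma_t+h\dot\gamma_t+o(h)\), which lets the Lipschitz bound absorb the discrepancy between moving along the curve and moving along the straight direction \(\dot\gamma_t\). Once this identification is secured, item ii) of Theorem \ref{thm:Alberti-Marchese} delivers the tangency at once.
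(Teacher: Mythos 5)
Your proposal is correct and follows essentially the same route as the paper's proof: both hinge on the Lipschitz estimate showing that \(f_0\) acquires a directional derivative at \(\gamma_t\) along \(\dot\gamma_t\) whenever \(\gamma\) and \(f_0\circ\gamma\) are differentiable at \(t\), and then invoke item ii) of Theorem \ref{thm:Alberti-Marchese} together with bounded compression and Fubini. The only difference is presentational (the paper phrases the argument via a Borel ``bad set'' \(B\) whose sections are shown to be \(\ppi\)-null, whereas you argue directly on the complement), which does not change the substance.
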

\begin{proof}
Let \(f_0\) be an \(L\)-Lipschitz function as in ii) of Theorem \ref{thm:Alberti-Marchese}.
Set \(B\subseteq C\big([0,1],\R^d\big)\times[0,1]\) as
\[B\coloneqq\Big\{(\gamma,t)\;\Big|\;\gamma\text{ and }f_0\circ\gamma
\text{ are differentiable at }t,\text{ and }\dot\gamma_t\notin V(\mu,\gamma_t)\Big\}.\]
It can be easily shown that \(B\) is Borel measurable. We can assume that \(\gamma\) is
absolutely continuous (since by definition a test plan is concentrated on absolutely 
continuous curves); in particular, also \(f_0 \circ \gamma\) is absolutely continuous,
and thus both \(\gamma\) and \(f_0 \circ \gamma\) are differentiable
\(\mathcal{L}^1\)-almost everywhere. In particular, we are done if
we can prove that \((\ppi\otimes\mathcal L^1)(B)=0\).

Call
\(B_t\coloneqq\big\{\gamma\;\big|\;(\gamma,t)\in B\big\}\) for every \(t\in[0,1]\).
Moreover, \(G\) stands for the set of all \(x\in\R^d\) such that \(f_0\) is not
differentiable at \(x\) with respect to any direction \(v\in\R^d\setminus V(\mu,x)\).
Thus, \(\mu(\R^d\setminus G)=0\) by Theorem \ref{thm:Alberti-Marchese}.
We claim that the inclusion \(\e_t(B_t)\subseteq\R^d\setminus G\)
holds for every \(t\in[0,1]\). Indeed, for every \(\gamma\in B_t\) one has that
\[\begin{split}
\bigg|\frac{f_0(\gamma_t+h\dot\gamma_t)-f_0(\gamma_t)}{h}-(f_0\circ\gamma)'_t\bigg|
&\leq\bigg|\frac{f_0(\gamma_t+h\dot\gamma_t)-f_0(\gamma_{t+h})}{h}\bigg|
+\bigg|\frac{f_0(\gamma_{t+h})-f_0(\gamma_t)}{h}-(f_0\circ\gamma)'_t\bigg|\\
&\leq\,L\,\bigg|\frac{\gamma_{t+h}-\gamma_t}{h}-\dot\gamma_t\bigg|
+\bigg|\frac{f_0(\gamma_{t+h})-f_0(\gamma_t)}{h}-(f_0\circ\gamma)'_t\bigg|,
\end{split}\]
so by letting \(h\to 0\) we conclude that \(f_0\) is differentiable at \(\gamma_t\)
in the direction \(\dot\gamma_t\), \emph{i.e.}, \(\gamma_t\notin G\).
Therefore, we conclude that \(\ppi(B_t)\leq\ppi\big(\e_t^{-1}(\R^d\setminus G)\big)\leq
\Comp(\ppi)\,\mu(\R^d\setminus G)=0\) for all \(t\in[0,1]\). This grants that 
\((\ppi\otimes\mathcal L^1)(B)=0\) by Fubini theorem, whence the statement follows.
\end{proof}
As a consequence of Lemma \ref{lem:pi_tangent_to_V}, we can readily prove 
that the modulus of the Alberti--Marchese gradient of a given Lipschitz
function is a weak upper gradient of the function itself:
\begin{proposition}\label{prop:nablaAM_wug}
Let \(\mu\geq 0\) be a Radon measure on \(\R^d\).
Let \(f\in\LIP_c(\R^d)\) be given. Then the function
\(|\nablaAM f|\in L^2(\mu)\) is a weak upper gradient of \(f\).
\end{proposition}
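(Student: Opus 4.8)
The plan is to show that $|\nablaAM f|$ satisfies the defining inequality of a weak upper gradient for $f$, by combining the "tangency" of test plan curves to the decomposability bundle (Lemma \ref{lem:pi_tangent_to_V}) with the differentiability formula \eqref{eq:formula_nablaAM}. Since $f\in\LIP_c(\R^d)$, the bound \eqref{eq:nablaAM_leq_lip} already gives $|\nablaAM f|\leq\lip(f)$, and $\lip(f)$ is bounded with compact support, so $|\nablaAM f|\in L^2(\mu)$; this disposes of the integrability requirement immediately. The substance is the pointwise inequality $|(f\circ\gamma)'_t|\leq|\nablaAM f(\gamma_t)|\,|\dot\gamma_t|$ for $(\ppi\otimes\mathcal L^1)$-a.e.\ $(\gamma,t)$.

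\emph{First I would} fix a test plan $\ppi$ and observe that for $\ppi$-a.e.\ $\gamma$ the composition $f\circ\gamma$ is absolutely continuous (as $f$ is Lipschitz and $\gamma$ is absolutely continuous), so $(f\circ\gamma)'_t$ exists for $\mathcal L^1$-a.e.\ $t$. By Lemma \ref{lem:pi_tangent_to_V}, after discarding a $(\ppi\otimes\mathcal L^1)$-null set, I may assume that at the relevant pairs $(\gamma,t)$ the derivative $\dot\gamma_t$ exists and lies in $V(\mu,\gamma_t)$, and (by Fubini, exactly as in the proof of that lemma) that $f$ is differentiable at $\gamma_t$ with respect to $V(\mu,\gamma_t)$ in the sense of \eqref{eq:formula_nablaAM}. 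The key computation is then the chain-rule estimate
\[
(f\circ\gamma)'_t=\lim_{h\to 0}\frac{f(\gamma_{t+h})-f(\gamma_t)}{h}
=\lim_{h\to 0}\frac{f(\gamma_t+h\dot\gamma_t)-f(\gamma_t)}{h}
=\nablaAM f(\gamma_t)\cdot\dot\gamma_t,
\]
where the middle equality uses that $\gamma_{t+h}=\gamma_t+h\dot\gamma_t+o(h)$ together with the Lipschitz continuity of $f$ to replace $\gamma_{t+h}$ by its linear approximation, and the last equality applies \eqref{eq:formula_nablaAM} along the direction $\dot\gamma_t\in V(\mu,\gamma_t)$. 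Taking absolute values and using Cauchy--Schwarz yields
\[
\big|(f\circ\gamma)'_t\big|=\big|\nablaAM f(\gamma_t)\cdot\dot\gamma_t\big|
\leq\big|\nablaAM f(\gamma_t)\big|\,|\dot\gamma_t|,
\]
which is precisely the weak upper gradient inequality.

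\emph{The main obstacle} I expect is justifying the middle limit in the chain-rule display rigorously: the differentiability provided by \eqref{eq:formula_nablaAM} is along the subspace $V(\mu,\gamma_t)$ as $v\to 0$ freely within that subspace, whereas here I am evaluating it along the specific sequence $v=h\dot\gamma_t$. The clean way to handle this is to split
\[
\frac{f(\gamma_{t+h})-f(\gamma_t)}{h}
=\frac{f(\gamma_{t+h})-f(\gamma_t+h\dot\gamma_t)}{h}
+\frac{f(\gamma_t+h\dot\gamma_t)-f(\gamma_t)}{h},
\]
bounding the first term by $L\,|\gamma_{t+h}-\gamma_t-h\dot\gamma_t|/|h|\to 0$ (using that $f$ is $L$-Lipschitz and $\dot\gamma_t$ is the derivative of $\gamma$ at $t$), and evaluating the second term via \eqref{eq:formula_nablaAM}, noting $h\dot\gamma_t\in V(\mu,\gamma_t)$ and $h\dot\gamma_t\to 0$. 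This is the same bookkeeping device already used in the proof of Lemma \ref{lem:pi_tangent_to_V}, so it should go through without difficulty. The remaining details — measurability of the exceptional sets and the application of Fubini's theorem to pass from $\ppi$-a.e.\ statements for fixed $t$ to a $(\ppi\otimes\mathcal L^1)$-a.e.\ statement — are routine.
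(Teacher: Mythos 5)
Your proposal is correct and follows essentially the same route as the paper's proof: tangency of the test plan to \(V(\mu,\cdot)\) via Lemma \ref{lem:pi_tangent_to_V}, the chain-rule identity \((f\circ\gamma)'_t=\nablaAM f(\gamma_t)\cdot\dot\gamma_t\) obtained by splitting the difference quotient through \(\gamma_t+h\dot\gamma_t\), and the integrability from \eqref{eq:nablaAM_leq_lip}. The only difference is that you spell out the bookkeeping for the middle limit explicitly, which the paper leaves implicit.
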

\begin{proof}
Let \(\ppi\) be any test plan over \((\R^d,\sfd_\Eucl,\mu)\). We claim that
for \(\ppi\)-a.e.\ \(\gamma\) it holds
\begin{equation}\label{eq:nablaAM_wug_claim}
(f\circ\gamma)'_t=\nablaAM f(\gamma_t)\cdot\dot\gamma_t\quad
\text{ for }\mathcal L^1\text{-a.e.\ }t\in[0,1].
\end{equation}
Indeed, for \((\ppi\otimes\mathcal L^1)\)-a.e.\ \((\gamma,t)\) we
have that \(f\) is differentiable at \(\gamma_t\) with respect to \(V(\mu,\gamma_t)\)
and that \(\dot\gamma_t\in V(\mu,\gamma_t)\); this stems from item i) of Theorem
\ref{thm:Alberti-Marchese} and Lemma \ref{lem:pi_tangent_to_V}.
Hence, \eqref{eq:formula_nablaAM} yields
\[\nablaAM f(\gamma_t)\cdot\dot\gamma_t=
\lim_{h\searrow 0}\frac{f(\gamma_t+h\dot\gamma_t)-f(\gamma_t)}{h}=
\lim_{h\searrow 0}\frac{f(\gamma_{t+h})-f(\gamma_t)}{h}
=(f\circ\gamma)'_t,\]
which proves the claim \eqref{eq:nablaAM_wug_claim}.
In particular, for \(\ppi\)-a.e.\ curve \(\gamma\) it holds
\[\big|(f\circ\gamma)'_t\big|\leq\big|\nablaAM f(\gamma_t)\big|\,|\dot\gamma_t|
\quad\text{ for }\mathcal L^1\text{-a.e.\ }t\in[0,1].\]
Given that \(|\nablaAM f|\in L^2(\mu)\) by \eqref{eq:nablaAM_leq_lip},
we conclude that \(|Df|\leq|\nablaAM f|\) holds in the \(\mu\)-a.e.\ sense.
\end{proof}
We are now in a position to prove the universal infinitesimal Hilbertianity of
the Euclidean space, as an immediate consequence of Proposition \ref{prop:nablaAM_wug}
and of the linearity of \(\nablaAM\):
\begin{theorem}[Infinitesimal Hilbertianity of weighted \(\R^d\)]\label{thm:Eucl_inf_Hilb}
Let \(\mu\geq 0\) be a Radon measure on \(\R^d\).
Then the metric measure space \((\R^d,\sfd_{\rm Eucl},\mu)\) is infinitesimally Hilbertian.
\end{theorem}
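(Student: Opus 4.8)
The plan is to introduce the Dirichlet energy of the Alberti--Marchese gradient, to show that it is a quadratic form, and to identify its $L^2(\mu)$-relaxation with the Cheeger energy; the parallelogram rule for $\sfE_\Ch$ will then follow. Throughout, denote by $\overline F$ the \emph{$L^2(\mu)$-lower semicontinuous envelope} of a functional $F\colon L^2(\mu)\to[0,+\infty]$, in the sense of Theorem \ref{thm:density_in_energy}. First I would define $\EAM\colon L^2(\mu)\to[0,+\infty]$ by setting $\EAM(f)\coloneqq\frac12\int|\nablaAM f|^2\,\d\mu$ if $f\in\LIP_c(\R^d)$ and $\EAM(f)\coloneqq+\infty$ otherwise. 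Since $f\pm g\in\LIP_c(\R^d)$ whenever $f,g\in\LIP_c(\R^d)$, the linearity \eqref{eq:nablaAM_linear} of $\nablaAM$ together with the pointwise parallelogram identity $|a+b|^2+|a-b|^2=2|a|^2+2|b|^2$ in $\R^d$ yields, upon integration,
\begin{equation*}
\EAM(f+g)+\EAM(f-g)=2\,\EAM(f)+2\,\EAM(g)\qquad\text{for all }f,g\in\LIP_c(\R^d),
\end{equation*}
so that $\EAM$ satisfies the parallelogram rule on $\LIP_c(\R^d)$.

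Next I would sandwich $\EAM$ between two functionals whose relaxation is already understood. On the one hand, Proposition \ref{prop:nablaAM_wug} gives $|Df|\le|\nablaAM f|$ $\mu$-a.e.\ for every $f\in\LIP_c(\R^d)$, whence $\sfE_\Ch\le\EAM$ on all of $L^2(\mu)$; on the other hand, \eqref{eq:nablaAM_leq_lip} gives $|\nablaAM f|\le\lip(f)$ $\mu$-a.e., whence $\EAM\le\sfE_\lip$. Since $(\R^d,\sfd_\Eucl)$ is proper, Theorem \ref{thm:density_in_energy} asserts that $\sfE_\Ch=\overline{\sfE_\lip}$; moreover $\sfE_\Ch$, being a relaxation, is lower semicontinuous, so $\overline{\sfE_\Ch}=\sfE_\Ch$. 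Applying the (monotone) relaxation operation to the chain $\sfE_\Ch\le\EAM\le\sfE_\lip$ then gives $\sfE_\Ch=\overline{\sfE_\Ch}\le\overline{\EAM}\le\overline{\sfE_\lip}=\sfE_\Ch$, so that $\overline{\EAM}=\sfE_\Ch$.

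The crux -- and the step I expect to demand the most care -- is to check that the $L^2(\mu)$-relaxation of a functional satisfying the parallelogram rule still satisfies it; granting this, the identity $\overline{\EAM}=\sfE_\Ch$ transfers quadraticity from $\EAM$ to $\sfE_\Ch$. I would argue directly with recovery sequences. Fix $f,g\in L^2(\mu)$; we may assume $\overline{\EAM}(f),\overline{\EAM}(g)<+\infty$, the inequality to be proven being otherwise trivial. Choose $f_n\to f$ and $g_n\to g$ in $L^2(\mu)$ with $\EAM(f_n)\to\overline{\EAM}(f)$ and $\EAM(g_n)\to\overline{\EAM}(g)$; then $f_n,g_n\in\LIP_c(\R^d)$ for large $n$, and $f_n\pm g_n\to f\pm g$. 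The definition of $\overline{\EAM}$, the superadditivity of $\varliminf$, and the parallelogram rule for $\EAM$ give
\begin{equation*}
\overline{\EAM}(f+g)+\overline{\EAM}(f-g)\le\limi_{n\to\infty}\big[\EAM(f_n+g_n)+\EAM(f_n-g_n)\big]=2\,\overline{\EAM}(f)+2\,\overline{\EAM}(g).
\end{equation*}
For the reverse inequality I would first note that relaxation preserves $2$-homogeneity: if $h_n\to\lambda h$ in $L^2(\mu)$ with $\lambda\neq 0$ then $h_n/\lambda\to h$ and $\EAM(h_n)=\lambda^2\,\EAM(h_n/\lambda)$, whence $\overline{\EAM}(\lambda h)=\lambda^2\,\overline{\EAM}(h)$. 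Applying the inequality just proven to the pair $f+g$ and $f-g$ yields $\overline{\EAM}(2f)+\overline{\EAM}(2g)\le 2\,\overline{\EAM}(f+g)+2\,\overline{\EAM}(f-g)$; since $\overline{\EAM}(2f)=4\,\overline{\EAM}(f)$ and $\overline{\EAM}(2g)=4\,\overline{\EAM}(g)$, dividing by $2$ produces the missing inequality $2\,\overline{\EAM}(f)+2\,\overline{\EAM}(g)\le\overline{\EAM}(f+g)+\overline{\EAM}(f-g)$.

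Combining the two inequalities shows that $\sfE_\Ch=\overline{\EAM}$ obeys the parallelogram rule \eqref{eq:parallelogram_id} on $W^{1,2}(\R^d,\sfd_\Eucl,\mu)$, and hence, by the criterion \eqref{eq:parallelogram_id} for infinitesimal Hilbertianity, the space $(\R^d,\sfd_\Eucl,\mu)$ is infinitesimally Hilbertian.
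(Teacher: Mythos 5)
Your proposal is correct and follows essentially the same route as the paper: sandwiching $\EAM$ between $\sfE_\Ch$ and $\sfE_\lip$ to identify $\sfE_\Ch$ as the relaxation of $\EAM$, then transferring the parallelogram rule via recovery sequences, with the reverse inequality obtained by applying the first one to $f+g$ and $f-g$. You merely make explicit the $2$-homogeneity step that the paper leaves implicit.
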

\begin{proof}
First of all, let us define the \emph{Alberti--Marchese energy}
functional \(\EAM\colon L^2(\mu)\to[0,+\infty]\) as
\[\EAM(f)\coloneqq\left\{\begin{array}{ll}
\frac{1}{2}\int|\nablaAM f|^2\,\d\mu\\
+\infty
\end{array}\quad\begin{array}{ll}
\text{ if }f\in\LIP_c(\R^d),\\
\text{ otherwise.}
\end{array}\right.\]
Since \(|Df|\leq|\nablaAM f|\leq\lip(f)\) holds \(\mu\)-a.e.\ for
any \(f\in\LIP_c(\R^d)\) by Proposition \ref{prop:nablaAM_wug} and
\eqref{eq:nablaAM_leq_lip}, we have that \(\sfE_\Ch\leq\EAM\leq\sfE_\lip\),
where \(\sfE_\Ch\) and \(\sfE_\lip\) are defined as in \eqref{eq:def_E_Ch}
and \eqref{eq:def_E_lip}, respectively.
In view of Theorem \ref{thm:density_in_energy}, we deduce that \(\sfE_\Ch\)
is the \(L^2(\mu)\)-lower semicontinuous envelope of \(\EAM\). Thanks to
the identities in \eqref{eq:nablaAM_linear}, we also know that \(\EAM\) satisfies
the parallelogram rule when restricted to \(\LIP_c(\R^d)\), which means that
\begin{equation}\label{eq:parallelogram_id_AM}
\EAM(f+g)+\EAM(f-g)=2\,\EAM(f)+2\,\EAM(g)\quad\text{ for every }f,g\in\LIP_c(\R^d).
\end{equation}
Fix \(f,g\in W^{1,2}(\R^d,\sfd_\Eucl,\mu)\). Let us choose any
two sequences $(f_n)_n,(g_n)_n\subseteq\LIP_c(\R^d)$ such that
\begin{itemize}
\item \(f_n\to f\) and \(g_n\to g\) in \(L^2(\mu)\),
\item \(\EAM(f_n)\to\sfE_\Ch(f)\) and \(\EAM(g_n)\to\sfE_\Ch(g)\).
\end{itemize}
In particular, observe that \(f_n+g_n\to f+g\) and \(f_n-g_n\to f-g\) in \(L^2(\mu)\).
Therefore, it holds that
\[\begin{split}
\sfE_\Ch(f+g)+\sfE_\Ch(f-g)&\leq\limi_{n\to\infty}\big(\EAM(f_n+g_n)+\EAM(f_n-g_n)\big)
\overset{\eqref{eq:parallelogram_id_AM}}=2\lim_{n\to\infty}\big(\EAM(f_n)+\EAM(g_n)\big)\\
&=2\,\sfE_\Ch(f)+2\,\sfE_\Ch(g).
\end{split}\]
By replacing $f$ and $g$ with $f+g$ and $f-g$, respectively,
we conclude that the converse inequality is verified as well. Consequently,
the Cheeger energy \(\sfE_\Ch\) satisfies the parallelogram rule
\eqref{eq:parallelogram_id}, thus \(W^{1,2}(\R^d,\sfd_\Eucl,\mu)\) is a Hilbert space.
This completes the proof of the statement.
\end{proof}
\begin{remark}{\rm
As a byproduct of the proof of Theorem \ref{thm:Eucl_inf_Hilb}, we
see that for all $f\in W^{1,2}(\R^d,\sfd_{\rm Eucl},\mu)$
there exists a sequence $(f_n)_n\subseteq\LIP_c(\R^d)$ such that
$f_n\to f$ and $|\nablaAM f_n|\to|Df|$ in $L^2(\mu)$.
\fr}\end{remark}
\begin{example}\label{ex:example_Cantor}{\rm
Given an arbitrary Radon measure \(\mu\) on \(\R^d\), it might happen that
\[|Df|\neq|\nablaAM f|\quad\text{ for some }f\in\LIP_c(\R^d).\]
For instance, consider the measure \(\mu\coloneqq\mathcal L^1|_C\) on \(\R\),
where \(C\subseteq\R\) is any Cantor set of positive Lebesgue measure. Since the
support of \(\mu\) is totally disconnected, one has that every \(f\in L^2(\mu)\)
is a Sobolev function with \(|Df|=0\). However, it holds
\(V(\mu,x)=\R\) for \(\mathcal L^1\)-a.e.\ \(x\in C\) by Rademacher theorem,
whence for any \(f\in\LIP(\R)\) we have that \(\nablaAM f(x)=f'(x)\) for
\(\mathcal L^1\)-a.e.\ \(x\in C\).
\fr}\end{example}
\section{Closability of the Sobolev norm on smooth functions}
The aim of this conclusive section is to address a problem that has been
raised by M.\ Fukushima (as reported in \cite[Section 2.6]{Bogachev10}).
Namely, we provide a (negative) answer to the following question:
{\it Does there
exist a singular Radon measure \(\mu\) on \(\R^2\) for which the Sobolev
norm \(\|\cdot\|_{W^{1,2}(\R^2,\sfd_{\rm Eucl},\mu)}\) is closable on
compactly-supported smooth functions (in the sense of Definition
\ref{def:closability} below)?}

Actually, we are going to prove a stronger result:
{\it Given any Radon measure \(\mu\) on \(\R^d\)
that is not absolutely continuous with
respect to \(\mathscr L^d\), it holds that
\(\|\cdot\|_{W^{1,2}(\R^d,\sfd_{\rm Eucl},\mu)}\) is not closable on
compactly-supported smooth functions.} Cf.\ Theorem \ref{thm:no_closable}
below.
\medskip

Let \(f\in C^\infty_c(\R^d)\) be given. Then we denote by \(\nabla f\colon\R^d\to\R^d\)
its classical gradient.
Note that the identity \(|\nabla f|=\lip(f)\) holds.
Given a Radon measure \(\mu\) on \(\R^d\), it is immediate
to check that
\begin{equation}\label{eq:proj_grad}
\nablaAM f(x)=\pi_x\big(\nabla f(x)\big)\quad\text{ for }\mu\text{-a.e.\ }x\in\R^d,
\end{equation}
where \(\pi_x\colon\R^d\to V(\mu,x)\) stands for the orthogonal projection map.
We denote by \(L^2_\mu(\R^d,\R^d)\) the space of all (equivalence classes, up
to \(\mu\)-a.e.\ equality, of) Borel maps \(v\colon\R^d\to\R^d\) with \(|v|\in L^2(\mu)\).

It holds that \(L^2_\mu(\R^d,\R^d)\) is a Hilbert space if
endowed with the norm \(v\mapsto\big(\int|v|^2\,\d\mu\big)^{1/2}\).
\begin{definition}[Closability of the Sobolev norm on smooth functions]
\label{def:closability}
Let \(\mu\) be a Radon measure on \(\R^d\). Then the Sobolev norm
\(\|\cdot\|_{W^{1,2}(\R^d,\sfd_{\rm Eucl},\mu)}\) is \emph{closable on
compactly-supported smooth functions} provided the following property is verified:
if a sequence \((f_n)_n\subseteq C^\infty_c(\R^d)\) satisfies \(f_n\to 0\) in \(L^2(\mu)\)
and \(\nabla f_n\to v\) in \(L^2_\mu(\R^d,\R^d)\) for some element
\(v\in L^2_\mu(\R^d,\R^d)\), then it holds that \(v=0\).
\end{definition}
In order to provide some alternative characterisations of the above-defined
closability property, we need to recall the following improvement of
Theorem \ref{thm:density_in_energy} in the weighted Euclidean space case:
\begin{theorem}[Density in energy of smooth functions
\cite{GP16-2}]\label{thm:density_in_energy_smooth}
Let \(\mu\) be a Radon measure on \(\R^d\).
Then \(\sfE_\Ch\) is the \(L^2(\mu)\)-lower semicontinuous envelope
of the functional
\[
L^2(\mu)\ni f\longmapsto\left\{\begin{array}{ll}
\frac{1}{2}\int|\nabla f|^2\,\d\mu\\
+\infty
\end{array}\quad\begin{array}{ll}
\text{ if }f\in C^\infty_c(\R^d),\\
\text{ otherwise.}
\end{array}\right.
\]
\end{theorem}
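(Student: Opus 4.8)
The plan is to prove the two inequalities $\bar F\ge\sfE_\Ch$ and $\bar F\le\sfE_\Ch$ separately, where I write $F\colon L^2(\mu)\to[0,+\infty]$ for the functional appearing in the statement (equal to $\frac12\int|\nabla f|^2\,\d\mu$ on $C^\infty_c(\R^d)$ and to $+\infty$ elsewhere) and $\bar F$ for its $L^2(\mu)$-lower semicontinuous envelope. The inequality $\bar F\ge\sfE_\Ch$ is the easy one: since $|\nabla f|=\lip(f)$ for every $f\in C^\infty_c(\R^d)$ and $C^\infty_c(\R^d)\subseteq\LIP_c(\R^d)$, the functionals $F$ and $\sfE_\lip$ coincide on $C^\infty_c(\R^d)$, while $F\equiv+\infty$ on $\LIP_c(\R^d)\setminus C^\infty_c(\R^d)$; hence $F\ge\sfE_\lip$ pointwise on $L^2(\mu)$. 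Because the operation of taking the $L^2(\mu)$-lower semicontinuous envelope is monotone and $(\R^d,\sfd_\Eucl)$ is proper, Theorem \ref{thm:density_in_energy} would then give $\bar F\ge\overline{\sfE_\lip}=\sfE_\Ch$.

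For the converse $\bar F\le\sfE_\Ch$, I would exploit that $\sfE_\Ch=\overline{\sfE_\lip}$ is, by definition, the \emph{greatest} $L^2(\mu)$-lower semicontinuous functional dominated by $\sfE_\lip$. Since $\bar F$ is itself $L^2(\mu)$-lower semicontinuous, it suffices to check that $\bar F\le\sfE_\lip$ pointwise; as $\sfE_\lip\equiv+\infty$ outside $\LIP_c(\R^d)$, this reduces to the single statement
\[
\bar F(g)\le\tfrac12\int\lip^2(g)\,\d\mu\qquad\text{for every }g\in\LIP_c(\R^d).
\]
In turn, to verify this it is enough to produce, for a fixed $g\in\LIP_c(\R^d)$, a sequence $(h_k)_k\subseteq C^\infty_c(\R^d)$ with $h_k\to g$ in $L^2(\mu)$ and $\lims_{k\to\infty}\int|\nabla h_k|^2\,\d\mu\le\int\lip^2(g)\,\d\mu$. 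The natural candidates are the mollifications $h_k\coloneqq g*\rho_{\varepsilon_k}$ with $\varepsilon_k\searrow 0$ and $(\rho_\varepsilon)_\varepsilon$ a standard family of smooth kernels: these lie in $C^\infty_c(\R^d)$, have supports contained in a fixed compact set, and converge uniformly to $g$, so that $h_k\to g$ in $L^2(\mu)$ because $\mu$ is finite on compact sets.

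The main obstacle is precisely the energy estimate $\lims_k\int|\nabla h_k|^2\,\d\mu\le\int\lip^2(g)\,\d\mu$, and this is where the singular nature of $\mu$ makes matters delicate. Jensen's inequality only yields the crude bound $\int|\nabla(g*\rho_\varepsilon)|^2\,\d\mu\le\int(|\nabla g|^2*\rho_\varepsilon)\,\d\mu$, whose right-hand side converges, as $\varepsilon\searrow 0$, to the integral of the upper semicontinuous envelope of $\lip^2(g)$; since $\lip(g)$ need not be upper semicontinuous, this envelope can be $\mu$-essentially strictly larger than $\lip^2(g)$ whenever $\mu$ charges the ($\mathcal L^d$-negligible) set on which $\lip(g)$ jumps. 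In other words, the crude estimate discards the cancellation taking place inside the convolution, which is exactly what keeps the mollified gradient small, and one cannot even make sense of ``$\int|\nabla g|^2\,\d\mu$'' directly, as $\nabla g$ is only defined $\mathcal L^d$-a.e.\ while $\mu$ may be singular.

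To recover the sharp bound I would bring in the differentiability structure of the decomposability bundle. By Theorem \ref{thm:Alberti-Marchese}, at $\mu$-a.e.\ $x$ the function $g$ is differentiable along $V(\mu,x)$ and, by \eqref{eq:nablaAM_leq_lip}, $|\nablaAM g(x)|\le\lip(g)(x)$; moreover $\nabla(g*\rho_\varepsilon)(x)=\int\nabla g(x-y)\,\rho_\varepsilon(y)\,\d y$, so the averages responsible for the mollified gradient are governed, along the directions that $\mu$ actually sees, by this differentiability. The aim would then be to show that $\lims_{\varepsilon}|\nabla(g*\rho_\varepsilon)|\le\lip(g)$ holds $\mu$-a.e.\ (or at least a suitable integrated surrogate thereof), after which a reverse Fatou argument---legitimate because the gradients are uniformly bounded by $\mathrm{Lip}(g)$ and supported in a fixed compact set of finite $\mu$-measure---would close the estimate. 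Quantifying the cancellation in the convolution against an arbitrary singular measure is, I expect, the genuine difficulty and the true content of the statement: it is precisely the point at which the structural results of \cite{GP16-2}, together with the Alberti--Marchese machinery underlying this paper, are indispensable.
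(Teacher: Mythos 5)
First, a point of comparison: the paper offers no proof of this statement at all --- it is imported verbatim from \cite{GP16-2} --- so there is no internal argument to measure yours against. On its own terms, the first half of your proposal is correct: since \(|\nabla f|=\lip(f)\) for \(f\in C^\infty_c(\R^d)\), your functional \(F\) dominates \(\sfE_\lip\), and monotonicity of relaxation together with Theorem \ref{thm:density_in_energy} (applicable since \(\R^d\) is proper) gives \(\bar F\geq\sfE_\Ch\). The reduction of the converse inequality to producing, for each \(g\in\LIP_c(\R^d)\), smooth \(h_k\to g\) in \(L^2(\mu)\) with \(\lims_k\int|\nabla h_k|^2\,\d\mu\leq\int\lip^2(g)\,\d\mu\) is also legitimate.

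The gap is the energy estimate itself, which you explicitly leave open, and the route you sketch for closing it cannot work: the pointwise bound \(\lims_{\varepsilon\to 0}|\nabla(g*\rho_\varepsilon)(x)|\leq\lip(g)(x)\) is false for \(d\geq 2\). Writing \(\nabla(g*\rho_\varepsilon)(x)\cdot e=\int\big(g(x-y)-g(x)\big)\,\partial_e\rho_\varepsilon(y)\,\d y\) and using only \(|g(x-y)-g(x)|\leq\big(\lip(g)(x)+o(1)\big)|y|\), the sharp constant is \(\int|y|\,|\partial_e\rho(y)|\,\d y\), which equals \(1\) in dimension one but \(4/\pi\) in dimension two for a radial decreasing kernel; this loss is realized by genuine Lipschitz functions, e.g.\ \(g(z)=L|z|\,\psi(Mz_1/|z|)\) with \(\psi\) smooth, odd, \(|\psi|\leq 1\), \(\psi(\pm\infty)=\pm 1\) and \(M\) large, for which \(\lip(g)(0)=L\) while \(\lims_\varepsilon|\nabla(g*\rho_\varepsilon)(0)|\) approaches \(\tfrac{4}{\pi}L\). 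Testing against \(\mu=\delta_0\) then shows that even the integrated inequality fails for the mollified sequence (the theorem survives because a different approximating sequence must be used). Nor can the Alberti--Marchese bundle rescue the estimate: \(\nabla(g*\rho_\varepsilon)(x)\) averages \(\nabla g\) over a full Lebesgue ball, and differentiability of \(g\) at the single point \(x\) along \(V(\mu,x)\) gives no control on the components of that average transversal to \(V(\mu,x)\). The standard repair is to aim at a weaker pointwise target: one trivially has \(|\nabla(g*\rho_\varepsilon)(x)|\leq{\rm Lip}\big(g,B_\varepsilon(x)\big)\), the supremum of difference quotients over the ball, whence \(\lims_\varepsilon|\nabla(g*\rho_\varepsilon)(x)|\leq\lip_a(g)(x)\) (the asymptotic Lipschitz constant) everywhere, and your reverse Fatou argument yields \(\bar F(g)\leq\frac12\int\lip_a^2(g)\,\d\mu\). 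This suffices because the relaxation theorem of \cite{AmbrosioGigliSavare11-3} holds in the stronger form with \(\lip_a\) in place of \(\lip\) --- a fact not recorded in Theorem \ref{thm:density_in_energy} as stated, so it must be invoked separately. That, in essence, is how \cite{GP16-2} proceeds; no cancellation analysis inside the convolution is needed.
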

\begin{lemma}\label{lem:equiv_closable}
Let \(\mu\) be a Radon measure on \(\R^d\).
Then the following conditions are equivalent:
\begin{itemize}
\item[\(\rm i)\)] The Sobolev norm \(\|\cdot\|_{W^{1,2}(\R^d,\sfd_{\rm Eucl},\mu)}\)
is closable on compactly-supported smooth functions.
\item[\(\rm ii)\)] The functional \(\sfE_\lip\) -- see \eqref{eq:def_E_lip} --
is \(L^2(\mu)\)-lower semicontinuous when restricted to \(C^\infty_c(\R^d)\).
\item[\(\rm iii)\)] The identity \(|Df|=|\nabla f|\) holds \(\mu\)-a.e.\ on \(\R^d\),
for every function \(f\in C^\infty_c(\R^d)\).
\end{itemize}
\end{lemma}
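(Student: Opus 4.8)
The plan is to establish the cyclic chain of implications \(\rm(iii)\Rightarrow(i)\Rightarrow(ii)\Rightarrow(iii)\). Throughout, I will rely on two elementary facts. First, since \(|\nabla f|=\lip(f)\) for every \(f\in C^\infty_c(\R^d)\), the restriction of \(\sfE_\lip\) to \(C^\infty_c(\R^d)\) coincides with the smooth energy functional whose \(L^2(\mu)\)-lower semicontinuous envelope is \(\sfE_\Ch\), by Theorem \ref{thm:density_in_energy_smooth}. Second, the pointwise bound \(|Df|\leq\lip(f)=|\nabla f|\) holds \(\mu\)-a.e.\ for every such \(f\); hence, for a fixed \(f\in C^\infty_c(\R^d)\), the equality of integrals \(\int|Df|^2\,\d\mu=\int|\nabla f|^2\,\d\mu\) is equivalent to the \(\mu\)-a.e.\ identity \(|Df|=|\nabla f|\), as the nonnegative integrand \(|\nabla f|^2-|Df|^2\) must then vanish.

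For \(\rm(iii)\Rightarrow(i)\), I would take \((f_n)_n\subseteq C^\infty_c(\R^d)\) with \(f_n\to 0\) in \(L^2(\mu)\) and \(\nabla f_n\to v\) in \(L^2_\mu(\R^d,\R^d)\), and aim to show \(v=0\). The idea is to test against an arbitrary \(g\in C^\infty_c(\R^d)\): for each \(t\in\R\) the competitor \(g+t\,f_n\) converges to \(g\) in \(L^2(\mu)\), so the envelope inequality of Theorem \ref{thm:density_in_energy_smooth} yields \(\int|Dg|^2\,\d\mu\leq\liminf_n\int|\nabla(g+t\,f_n)|^2\,\d\mu=\int|\nabla g+t\,v|^2\,\d\mu\). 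Rewriting the left-hand side via \(\rm(iii)\) and expanding the square, the inequality \(0\leq 2t\int\nabla g\cdot v\,\d\mu+t^2\int|v|^2\,\d\mu\), valid for all \(t\in\R\), forces \(\int\nabla g\cdot v\,\d\mu=0\) for every \(g\in C^\infty_c(\R^d)\). Choosing \(g=f_n\) and letting \(n\to\infty\) (using the strong convergence \(\nabla f_n\to v\)) gives \(\int|v|^2\,\d\mu=0\), hence \(v=0\).

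For \(\rm(ii)\Rightarrow(iii)\), I fix \(f\in C^\infty_c(\R^d)\) and invoke Theorem \ref{thm:density_in_energy_smooth} to pick a recovery sequence \((f_n)_n\subseteq C^\infty_c(\R^d)\) with \(f_n\to f\) in \(L^2(\mu)\) and \(\int|\nabla f_n|^2\,\d\mu\to\int|Df|^2\,\d\mu\). As both \(f_n\) and \(f\) belong to \(C^\infty_c(\R^d)\), the assumed lower semicontinuity \(\rm(ii)\) gives \(\int|\nabla f|^2\,\d\mu\leq\liminf_n\int|\nabla f_n|^2\,\d\mu=\int|Df|^2\,\d\mu\); together with \(\int|Df|^2\,\d\mu\leq\int|\nabla f|^2\,\d\mu\) this yields equality of the integrals, and hence the \(\mu\)-a.e.\ identity by the standing remark.

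The main obstacle is \(\rm(i)\Rightarrow(ii)\), where I must pass from weak to strong convergence in order to apply the closability hypothesis. Given \(f_n,f\in C^\infty_c(\R^d)\) with \(f_n\to f\) in \(L^2(\mu)\), I may assume \(\int|\nabla f_n|^2\,\d\mu\to L<\infty\) along a subsequence realizing the \(\liminf\) (otherwise there is nothing to prove). Then \((\nabla f_n)_n\) is bounded in the Hilbert space \(L^2_\mu(\R^d,\R^d)\) and, up to a further subsequence, \(\nabla f_n\rightharpoonup w\) weakly with \(\|w\|^2\leq L\). The differences \(g_n\coloneqq f_n-f\in C^\infty_c(\R^d)\) satisfy \(g_n\to 0\) in \(L^2(\mu)\) and \(\nabla g_n\rightharpoonup w-\nabla f\) weakly; however, the closability in Definition \ref{def:closability} concerns \emph{strong} limits, so weak convergence alone is insufficient. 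The device that overcomes this is Mazur's lemma: suitable convex combinations \(\tilde g_k\) of \(\{g_n\}_{n\geq k}\) belong to \(C^\infty_c(\R^d)\), converge to \(0\) in \(L^2(\mu)\), and satisfy \(\nabla\tilde g_k\to w-\nabla f\) strongly in \(L^2_\mu(\R^d,\R^d)\). Closability \(\rm(i)\) then forces \(w=\nabla f\), whence \(\int|\nabla f|^2\,\d\mu=\|w\|^2\leq L=\liminf_n\int|\nabla f_n|^2\,\d\mu\), which is exactly the sought lower semicontinuity.
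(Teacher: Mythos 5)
Your proof is correct, and two of the three implications coincide with the paper's: your \({\rm i)}\Rightarrow{\rm ii)}\) is the paper's argument verbatim except that you extract strongly convergent convex combinations via Mazur's lemma where the paper uses the Banach--Saks theorem (an immaterial difference), and your \({\rm ii)}\Rightarrow{\rm iii)}\) is identical. The genuine divergence is in closing the cycle. The paper proves \({\rm iii)}\Rightarrow{\rm i)}\) by contradiction: given \(f_n\to 0\) with \(\nabla f_n\to v\neq 0\), it picks an index \(k\) with \(\nabla f_k\) close to \(v\) and shows that \(\sfE_\lip\) fails to be lower semicontinuous along \(g_n=f_k-f_n\to f_k\); note that this argument really contradicts item ii), so it implicitly uses the (easy, but unstated) implication \({\rm iii)}\Rightarrow{\rm ii)}\), namely that under iii) one has \(\sfE_\lip=\sfE_\Ch\) on \(C^\infty_c(\R^d)\) and \(\sfE_\Ch\) is lower semicontinuous. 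Your route is direct and self-contained: testing the envelope inequality of Theorem \ref{thm:density_in_energy_smooth} along \(g+tf_n\) and expanding the quadratic in \(t\) yields the orthogonality \(\int\nabla g\cdot v\,\d\mu=0\) for all \(g\in C^\infty_c(\R^d)\), and taking \(g=f_n\) with \(n\to\infty\) gives \(\int|v|^2\,\d\mu=0\). This is the classical ``integration-by-parts-free'' closability argument; it buys a cleaner logical cycle \({\rm iii)}\Rightarrow{\rm i)}\Rightarrow{\rm ii)}\Rightarrow{\rm iii)}\) at essentially no extra cost, while the paper's version is marginally shorter once one grants the lower semicontinuity it invokes.
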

\begin{proof}\ \\
{\color{blue}\({\rm i)}\Longrightarrow{\rm ii)}\)} Fix any \(f\in C^\infty_c(\R^d)\)
and \((f_n)_n\subseteq C^\infty_c(\R^d)\) such that \(f_n\to f\) in \(L^2(\mu)\).
We claim that
\begin{equation}\label{eq:equiv_closable_claim}
\int|\nabla f|^2\,\d\mu\leq\limi_{n\to\infty}\int|\nabla f_n|^2\,\d\mu.
\end{equation}
Without loss of generality, we may assume the right-hand side
in \eqref{eq:equiv_closable_claim} is finite. Therefore, we can find a subsequence
\((f_{n_k})_k\) of \((f_n)_n\) and an element \(v\in L^2_\mu(\R^d,\R^d)\) such that
\(\lim_k\int|\nabla f_{n_k}|^2\,\d\mu=\limi_n\int|\nabla f_n|^2\,\d\mu\) and
\(\nabla f_{n_k}\rightharpoonup v\) in the weak topology of \(L^2_\mu(\R^d,\R^d)\).
By virtue of Banach--Saks theorem, we can additionally require that
\(\nabla\tilde f_k\to v\) in the strong topology of \(L^2_\mu(\R^d,\R^d)\),
where we set \(\tilde f_k\coloneqq\frac{1}{k}\sum_{i=1}^k f_{n_i}\in C^\infty_c(\R^d)\)
for all \(k\in\N\). Since \(\tilde f_k-f\to 0\) in \(L^2(\mu)\)
and \(\nabla(\tilde f_k-f)\to v-\nabla f\) in \(L^2_\mu(\R^d,\R^d)\),
we deduce from i) that \(v=\nabla f\). Consequently, we have
that \(\nabla f_n\rightharpoonup\nabla f\) in the weak topology
of \(L^2_\mu(\R^d,\R^d)\), thus proving \eqref{eq:equiv_closable_claim}
by semicontinuity of the norm. In other words, it holds that
\(\sfE_\lip(f)\leq\limi_n\sfE_\lip(f_n)\), which yields the
validity of item ii).\\
{\color{blue}\({\rm ii)}\Longrightarrow{\rm iii)}\)} Let
\(f\in C^\infty_c(\R^d)\) be given. Theorem
\ref{thm:density_in_energy_smooth} yields existence of a sequence
\((f_n)_n\subseteq C^\infty_c(\R^d)\) such that \(f_n\to f\)
and \(|\nabla f_n|\to|Df|\) in \(L^2(\mu)\). Therefore,
item ii) ensures that
\[
\frac{1}{2}\int|\nabla f|^2\,\d\mu
=\sfE_\lip(f)\leq\limi_{n\to\infty}\sfE_\lip(f_n)=\lim_{n\to\infty}
\frac{1}{2}\int|\nabla f_n|^2\,\d\mu=\frac{1}{2}\int|Df|^2\,\d\mu.
\]
Since \(|Df|\leq|\nabla f|\) holds \(\mu\)-a.e.\ on \(\R^d\),
we conclude that \(|Df|=|\nabla f|\), thus proving item iii).\\
{\color{blue}\({\rm iii)}\Longrightarrow{\rm i)}\)} We argue by
contradiction: suppose that there exists a sequence
\((f_n)_n\subseteq C^\infty_c(\R^d)\) such that \(f_n\to 0\)
in \(L^2(\mu)\) and \(\nabla f_n\to v\) in \(L^2_\mu(\R^d,\R^d)\)
for some \(v\in L^2_\mu(\R^d,\R^d)\setminus\{0\}\).
Fix any \(k\in\N\) such that \(\|\nabla f_k-v\|_{L^2_\mu(\R^d,\R^d)}
\leq\frac{1}{3}\|v\|_{L^2_\mu(\R^d,\R^d)}\). In particular,
\(\|\nabla f_k\|_{L^2_\mu(\R^d,\R^d)}
\geq\frac{2}{3}\|v\|_{L^2_\mu(\R^d,\R^d)}\). Let us define
\(g_n\coloneqq f_k-f_n\in C^\infty_c(\R^d)\) for every \(n\in\N\).
Since \(g_n\to f_k\) in \(L^2(\mu)\) and \(\nabla g_n\to\nabla f_k-v\)
in \(L^2_\mu(\R^d,\R^d)\) as \(n\to\infty\), we conclude that
\[
\|\nabla f_k\|_{L^2_\mu(\R^d,\R^d)}\geq\frac{2}{3}\,\|v\|_{L^2_\mu(\R^d,\R^d)}
>\frac{1}{3}\,\|v\|_{L^2_\mu(\R^d,\R^d)}\geq
\|\nabla f_k-v\|_{L^2_\mu(\R^d,\R^d)}=
\lim_{n\to\infty}\|\nabla g_n\|_{L^2_\mu(\R^d,\R^d)},
\]
whence \(\sfE_\lip(f_k)>\lim_n\sfE_\lip(g_n)\). This contradicts
the lower semicontinuity of \(\sfE_\lip\) on \(C^\infty_c(\R^d)\).
Consequently, item i) is proven.
\end{proof}
The last ingredient we need is the following result proven by
G.\ De Philippis and F.\ Rindler:
\begin{theorem}[Weak converse of Rademacher theorem
\cite{DPR}]\label{thm:DPR}
Let \(\mu\) be a Radon measure on \(\R^d\). Suppose all Lipschitz
functions \(f\colon\R^d\to\R\) are \(\mu\)-a.e.\ differentiable.
Then it holds that \(\mu\ll\mathcal L^d\).
\end{theorem}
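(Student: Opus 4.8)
The plan is to argue by contraposition: assuming $\mu\not\ll\mathcal L^d$, I would produce a Lipschitz function that fails to be differentiable on a set of positive $\mu$-measure. The first move is a reduction that uses only Theorem \ref{thm:Alberti-Marchese}. I claim that the hypothesis ``every $f\in\LIP(\R^d)$ is differentiable $\mu$-a.e.'' forces $V(\mu,x)=\R^d$ for $\mu$-a.e. $x$. Indeed, take the function $f_0$ from item ii): at any point of classical differentiability of $f_0$ all directional derivatives exist, whereas ii) asserts that $f_0$ has no directional derivative along any $v\notin V(\mu,x)$ for $\mu$-a.e. $x$. Hence full differentiability of $f_0$ is incompatible with the existence of directions outside the bundle, so $V(\mu,x)=\R^d$ for $\mu$-a.e. $x$. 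It therefore suffices to prove: if the decomposability bundle has full rank $\mu$-a.e., then $\mu\ll\mathcal L^d$.

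The second step translates the full-rank condition into a PDE constraint. Here I would invoke the dual description of the decomposability bundle implicit in the Alberti--Marchese construction: $V(\mu,x)$ is $\mu$-a.e. spanned by the polars $\frac{d\sigma}{d\mu}(x)$ of $\R^d$-valued measures $\sigma\ll\mu$ whose distributional divergence $\operatorname{div}\sigma$ is again a finite measure (the vector measures carried by the normal $1$-dimensional currents in an optimal curve decomposition of $\mu$). If $V(\mu,\cdot)=\R^d$ $\mu$-a.e., a measurable-selection and exhaustion argument yields, locally, measures $\sigma_1,\dots,\sigma_d\ll\mu$ whose polars are linearly independent at $\mu$-a.e. $x$. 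Stacking them as the rows of a $d\times d$ matrix-valued measure $M=P\mu$, with $P(x)$ invertible $\mu$-a.e., I obtain an object that is $\mathcal A$-free modulo bounded terms for the operator $\mathcal A M\coloneqq\operatorname{div}M$ (row-wise divergence): $\mathcal A M$ is a finite $\R^d$-valued measure. Its symbol is $\mathbb A(\xi)M=M\xi$, so the associated wave cone is $\Lambda_{\mathcal A}=\bigcup_{\xi\neq 0}\{M:M\xi=0\}=\{M:\det M=0\}$, the singular matrices, whereas by construction the polar of $M$ is invertible $\mu$-a.e.

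The engine is then the De Philippis--Rindler structure theorem for $\mathcal A$-free measures: the polar of such a measure lies in $\Lambda_{\mathcal A}$ at $|\cdot|^s$-a.e. point, where $(\cdot)^s$ denotes the part singular with respect to $\mathcal L^d$ (the finite, lower-order term $\mathcal A M$ is removed by a standard potential correction, or simply disappears in the blow-up). Applied to $M$, this forces $\frac{dM}{d|M|}$ to be a \emph{singular} matrix $|M|^s$-a.e. But if $\mu^s\neq 0$, then since each $\sigma_i\ll\mu$ the restriction of $M$ to the singular set of $\mu$ contributes a nontrivial $|M|^s$ on which the polar equals the normalisation of the invertible matrix $P$, hence is invertible. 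This contradiction is avoidable only if $\mu^s=0$, that is $\mu\ll\mathcal L^d$, which is the desired conclusion.

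The main obstacle is the structure theorem itself, and I would prove it by a blow-up analysis: at $|\mu|^s$-a.e. point one extracts tangent measures that remain $\mathcal A$-free and singular with respect to Lebesgue, of the rigid form $P_0\nu$ with $P_0$ the frozen polar and $\nu$ a nonnegative singular scalar measure. The crux is a Fourier-analytic rigidity lemma: an $\mathcal A$-free measure $P_0\nu$ with $P_0\notin\Lambda_{\mathcal A}$ must be absolutely continuous, because $\mathbb A(\xi)P_0\neq 0$ for every $\xi\neq 0$ provides an ellipticity that upgrades the regularity of $\nu$ and contradicts its singularity. Controlling the blow-up — the existence and $\mathcal A$-freeness of the tangent measures, and the passage from this one-point rigidity to the almost-everywhere statement — is where the delicate multi-scale and measure-theoretic estimates enter, and this is the step I expect to be hardest; the reduction and the wave-cone computation above are comparatively routine.
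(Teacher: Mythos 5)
You should know at the outset that the paper does not prove this statement at all: Theorem \ref{thm:DPR} is imported as a black box from De Philippis--Rindler \cite{DPR}, so there is no internal proof to compare against, and a citation was the expected ``proof''. Measured against the actual argument in the literature, your reduction is the correct one and essentially reconstructs how \cite{DPR} themselves deduce the converse of Rademacher: the function \(f_0\) from item ii) of Theorem \ref{thm:Alberti-Marchese} does force \(V(\mu,x)=\R^d\) for \(\mu\)-a.e.\ \(x\); the fact that \(V(\mu,\cdot)\) is \(\mu\)-a.e.\ spanned by polars of divergence-measure vector fields (equivalently, of normal \(1\)-currents) is a genuine theorem of Alberti--Marchese \cite{AM16}, legitimate to cite; and your wave-cone computation \(\Lambda_{\mathcal A}=\{M:\det M=0\}\) for the row-wise divergence, together with the contradiction on the singular part, is sound --- note that since \(P(x)\) is invertible \(\mu\)-a.e., the measures \(|M|\) and \(\mu\) are mutually absolutely continuous, so \(|M|^s\neq 0\) exactly when \(\mu^s\neq 0\), and the polar of \(M\) there is the normalisation of \(P\), which is invertible, against the structure theorem.

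The genuine gap sits exactly where you place it, but it is more serious than ``delicate multi-scale estimates'': the blow-up mechanism you sketch fails as literally described. Tangent measures of a singular measure need \emph{not} remain singular with respect to \(\mathcal L^d\) --- singularity is not preserved under weak-\(\ast\) limits, and there exist purely singular measures all of whose tangent measures are multiples of Lebesgue measure. Consequently, even granting the rigid form \(P_0\nu\) for the blow-up (which is fine at \(|\mu|\)-Lebesgue points of the polar), your Fourier rigidity lemma only shows \({\rm supp}\,\hat\nu\subseteq\{0\}\), hence \(\nu\) absolutely continuous; this yields no contradiction, because an absolutely continuous tangent measure is perfectly compatible with \(\mu^s\neq 0\). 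This is precisely why De Philippis and Rindler do not argue by soft tangent-measure rigidity: they fix a point where the \(d\)-dimensional density of \(|\mu|\) is infinite and the rescaled divergence is asymptotically negligible, and then prove a \emph{quantitative} harmonic-analysis estimate --- inverting the symbol \(\mathbb A(\xi)\) away from the directions where \(\mathbb A(\xi)P_0\) degenerates and controlling the error coming from the oscillation of the polar --- which bounds the rescaled mass and contradicts the infinite density. That estimate is the real content of the theorem and is absent from your proposal; everything preceding it is a faithful and correctly assembled reduction.
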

We are finally in a position to prove the following statement
concerning closability:
\begin{theorem}[Failure of closability for singular measures]
\label{thm:no_closable}
Let \(\mu\geq 0\) be a given Radon measure on \(\R^d\).
Suppose that \(\mu\) is not absolutely continuous with respect
to the Lebesgue measure \(\mathcal L^d\). Then
the Sobolev norm \(\|\cdot\|_{W^{1,2}(\R^d,\sfd_{\rm Eucl},\mu)}\)
is not closable on compactly-supported smooth functions.
\end{theorem}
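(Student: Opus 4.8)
The plan is to argue by contraposition: I will show that if the Sobolev norm \emph{is} closable on compactly-supported smooth functions, then necessarily \(\mu\ll\mathcal L^d\), which is exactly the contrapositive of the assertion. The argument combines the characterisation of closability furnished by Lemma \ref{lem:equiv_closable}, the projection formula \eqref{eq:proj_grad}, and the weak converse of Rademacher's theorem (Theorem \ref{thm:DPR}).

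Assuming closability, item iii) of Lemma \ref{lem:equiv_closable} gives \(|Df|=|\nabla f|\) \(\mu\)-a.e.\ for every \(f\in C^\infty_c(\R^d)\). On the other hand, Proposition \ref{prop:nablaAM_wug} yields \(|Df|\leq|\nablaAM f|\) \(\mu\)-a.e., while \eqref{eq:proj_grad} gives \(|\nablaAM f(x)|=|\pi_x(\nabla f(x))|\leq|\nabla f(x)|\). Chaining these estimates, I obtain
\[
|\nabla f(x)|=|Df|(x)\leq|\nablaAM f(x)|=\big|\pi_x(\nabla f(x))\big|\leq|\nabla f(x)|
\quad\text{for }\mu\text{-a.e.\ }x,
\]
so all the terms coincide; in particular the orthogonal projection \(\pi_x\) preserves the norm of \(\nabla f(x)\). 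By the Pythagorean identity \(|w|^2=|\pi_x w|^2+|w-\pi_x w|^2\), the equality \(|\pi_x w|=|w|\) forces \(w\in V(\mu,x)\). Hence \(\nabla f(x)\in V(\mu,x)\) for \(\mu\)-a.e.\ \(x\), for every \(f\in C^\infty_c(\R^d)\).

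The next step is to upgrade this to \(V(\mu,x)=\R^d\) for \(\mu\)-a.e.\ \(x\), which I achieve by testing against an explicit countable family. For each coordinate direction \(e_i\) (\(i=1,\dots,d\)) and each radius \(R\in\N\), fix a cutoff \(\chi_R\in C^\infty_c(\R^d)\) with \(\chi_R\equiv 1\) on the open ball \(B_R(0)\), and set \(f_{i,R}(y)\coloneqq\chi_R(y)\,y_i\). Then \(f_{i,R}\in C^\infty_c(\R^d)\) and \(\nabla f_{i,R}\equiv e_i\) on \(B_R(0)\), so the previous paragraph gives \(e_i\in V(\mu,x)\) for \(\mu\)-a.e.\ \(x\in B_R(0)\). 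Letting \(R\to\infty\) and then intersecting over \(i=1,\dots,d\) (both countable operations, hence harmless for the \(\mu\)-a.e.\ conclusion) shows \(\{e_1,\dots,e_d\}\subseteq V(\mu,x)\), whence \(V(\mu,x)=\R^d\), for \(\mu\)-a.e.\ \(x\).

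Finally, since \(V(\mu,x)=\R^d\) \(\mu\)-a.e., property i) of Theorem \ref{thm:Alberti-Marchese} says precisely that every \(f\in\LIP(\R^d)\) is classically differentiable at \(\mu\)-a.e.\ point. The weak converse of Rademacher's theorem (Theorem \ref{thm:DPR}) then forces \(\mu\ll\mathcal L^d\), which completes the contrapositive. The only delicate point of the scheme is the squeezing step: one must recognise that closability turns the \emph{a priori} inequality \(|\nablaAM f|\leq|\nabla f|\) into a \(\mu\)-a.e.\ equality, which via the projection characterisation \(|\pi_x w|=|w|\Leftrightarrow w\in V(\mu,x)\) feeds all Euclidean directions into the decomposability bundle. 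The cutoff construction and the appeal to Theorem \ref{thm:DPR} are then routine.
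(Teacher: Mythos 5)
Your proof is correct and uses the same ingredients as the paper's -- item iii) of Lemma \ref{lem:equiv_closable}, Proposition \ref{prop:nablaAM_wug}, the projection formula \eqref{eq:proj_grad}, Theorem \ref{thm:Alberti-Marchese} and Theorem \ref{thm:DPR} -- but it runs the argument in the contrapositive direction, and this changes the shape of the key step. The paper argues forward: non-absolute continuity together with Theorem \ref{thm:DPR} produces a Lipschitz function that is non-differentiable on a set \(P\) with \(\mu(P)>0\), hence \(V(\mu,x)\neq\R^d\) there; one must then \emph{select} a single vector \(v\) and a compact set \(K\subseteq P\) of positive measure with \(v\notin V(\mu,x)\) \(\mu\)-a.e.\ on \(K\) (a pigeonhole/measurable-selection step the paper passes over quickly), and finally build \(g\in C^\infty_c(\R^d)\) with \(\nabla g\equiv v\) on \(K\), so that \(|Dg|\leq|\pi_x(v)|<|v|=|\nabla g|\) on \(K\) contradicts item iii) of Lemma \ref{lem:equiv_closable}. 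You instead extract from closability the identity \(|\pi_x(\nabla f(x))|=|\nabla f(x)|\) \(\mu\)-a.e.\ for every smooth \(f\), and feed in the fixed countable family \(\chi_R(y)\,y_i\) to conclude \(V(\mu,x)=\R^d\) \(\mu\)-a.e.; this sidesteps the selection of the pair \((K,v)\) entirely and makes the appeal to Theorem \ref{thm:DPR} a one-line finish. Both routes are equally valid; yours is marginally more self-contained, at the cost of the (harmless, but worth stating) observation that differentiability with respect to \(V(\mu,x)=\R^d\) in \eqref{eq:formula_nablaAM} is exactly classical differentiability, which is what Theorem \ref{thm:DPR} requires as input.
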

\begin{proof}
First of all, Theorem \ref{thm:DPR} grants the existence of a
Lipschitz function \(f\colon\R^d\to\R\) and a Borel set
\(P\subseteq\R^d\) such that \(\mu(P)>0\) and \(f\) is not
differentiable at any point of \(P\). Recalling Theorem
\ref{thm:Alberti-Marchese}, we then see that \(V(\mu,x)\neq\R^n\)
for \(\mu\)-a.e.\ \(x\in P\). Therefore, we can find a compact
set \(K\subseteq P\) and a vector \(v\in\R^d\) such that \(\mu(K)>0\)
and \(v\notin V(\mu,x)\) for \(\mu\)-a.e.\ \(x\in K\). Now pick
any \(g\in C^\infty_c(\R^d)\) such that \(\nabla g(x)=v\)
holds for all \(x\in K\). Then Proposition \ref{prop:nablaAM_wug}
and \eqref{eq:proj_grad} yield
\[
|D g|(x)\leq|\nablaAM\,g|(x)=\big|\pi_x\big(\nabla g(x)\big)\big|
=\big|\pi_x(v)\big|<|v|=|\nabla g|(x)
\quad\text{ for }\mu\text{-a.e.\ }x\in K,
\]
thus accordingly \(\|\cdot\|_{W^{1,2}(\R^d,\sfd_{\rm Eucl},\mu)}\)
is not closable on compactly-supported smooth functions by
Lemma \ref{lem:equiv_closable}. Hence, the statement is achieved.
\end{proof}
\begin{remark}{\rm
The converse of Theorem \ref{thm:no_closable} might fail.
For instance, the measure \(\mu\) described in Example
\ref{ex:example_Cantor} is absolutely continuous with respect
to \(\mathcal L^1\), but the Sobolev norm
\(\|\cdot\|_{W^{1,2}(\R,\sfd_{\rm Eucl},\mu)}\) is not closable
on compactly-supported smooth functions as a consequence of
Lemma \ref{lem:equiv_closable}.
\fr}\end{remark}
\def\cprime{$'$} \def\cprime{$'$}

\end{document}